\newcommand{\lyxmathsym}[1]{\ifmmode\begingroup\def\b@ld{bold}
  \text{\ifx\math@version\b@ld\bfseries\fi#1}\endgroup\else#1\fi}
\providecommand{\tabularnewline}{\\}
\numberwithin{equation}{section}
\numberwithin{figure}{section}
\theoremstyle{plain}
\newtheorem{thm}{\protect\theoremname}
  \theoremstyle{definition}
  \newtheorem{example}[thm]{\protect\examplename}
  \theoremstyle{plain}
  \newtheorem{cor}[thm]{\protect\corollaryname}
\newcommand{\mathsym}[1]{{}}
\newcommand{\unicode}[1]{{}}
\def\ps@pprintTitle{%
 \let\@oddhead\@empty
 \let\@evenhead\@empty
 \def\@oddfoot{}%
 \let\@evenfoot\@oddfoot}
  \providecommand{\corollaryname}{Corollary}
  \providecommand{\examplename}{Example}
\providecommand{\theoremname}{Theorem}
\begin{document}

\begin{frontmatter}{}
\begin{keyword}
Jordan superalgebras \sep classification of superalgebras \sep speciality

\MSC[2010]{17C27, 17C40, 17C70.}\textbf{}
\end{keyword}

\title{{\Huge{}Classification of three-dimensional Jordan superalgebras}}

%

\author[fn1,rvt]{M. E. Martin\corref{cor1}}

\ead{eugenia@ime.usp.br}

\fntext[fn1]{The author was supported by CAPES, PNPD Program, IME-USP.}



\address[rvt]{Instituto de Matemática e Estat\'{i}stica. Universidade de São Paulo.
R. Matão 1010, 05508-090. São Paulo, SP, Brazil. }

\global\long\def\ka{\mathbb{\mathbf{k}}}

\global\long\def\F{\mathcal{F}}

\global\long\def\P{\mathcal{P}}

\global\long\def\J{\mathcal{J}}

\global\long\def\Jor{\operatorname{\J\hspace{-0.063cm}or}}

\global\long\def\JorR{\operatorname{\J\hspace{-0.063cm}or}^{\mathbb{R}}}

\global\long\def\JorC{\operatorname{\J\hspace{-0.063cm}or}^{\mathbb{C}}}

\global\long\def\JorN{\operatorname{\J\hspace{-0.063cm}orN}}

\global\long\def\M{\mathcal{M}}

\global\long\def\U{\mathcal{U}}

\global\long\def\B{\mathcal{B}}

\global\long\def\T{\mathcal{T}}

\global\long\def\S{\mathcal{S}}

\global\long\def\G{\operatorname{G}}

\global\long\def\V{\operatorname{V}}

\global\long\def\W{\operatorname{W}}

\global\long\def\Id{\operatorname{Id}}

\global\long\def\car{\operatorname{char}}

\global\long\def\Mat{\operatorname*{Mat}}

\global\long\def\Aut{\operatorname*{Aut}}

\global\long\def\Rad{\operatorname*{Rad}}

\global\long\def\Ann{\operatorname*{Ann}}

\global\long\def\Anc{\operatorname*{Anc}}

\global\long\def\Hom{\operatorname*{Hom}}

\global\long\def\Mod{\operatorname*{Mod}}

\global\long\def\Lie{\operatorname*{Lie}}

\global\long\def\Assoc{\operatorname*{Assoc}}

\global\long\def\dsum{{\displaystyle \sum}}

\global\long\def\Gr{\operatorname{Gr}(m,n)}
 
\begin{abstract}
In this paper, we classify Jordan superalgebras of dimension up to
three over an algebraically closed field of characteristic different
of two. We also prove that all such superalgebras are special.
\end{abstract}

\end{frontmatter}{}

\section*{Introduction}

The goal of this paper is to classify Jordan superalgebras of dimension
up to three over an algebraically closed field of characteristic different
of $2$. Our main motivation to obtain such classification comes
out from the intention to give an answer to the problem of determining
the minimal dimension of exceptional Jordan superalgebras posed in\foreignlanguage{brazil}{
\cite{ExceptionalShestakov}}. This problem is analogous to the problem
presented in \cite{Dniester_aberto} by H. Petersson and A. M. Slin'ko
for ordinary Jordan algebras. Our strategy to provide a lower bounded
for this dimension is to determine the complete list of Jordan superalgebras
of small dimensions and verify which ones are special or exceptional.

The classification problem of the algebraic structures of given dimension,
i.e. find the list of all non-isomorphic objects in this structure,
has been extensively studied. But in the context of graded algebras
the results are relatively scarce. Most of the known results focus
on the theory of graded Lie algebras. In \foreignlanguage{brazil}{\cite{KacLie1977},}
V. G. Kac gave a brief account of the main features of the theory
of finite dimensional Lie superalgebras. The author announced in \foreignlanguage{brazil}{\cite{Kac1975}}
the complete solution of the classification problem of simple Lie
superalgebras under the stimuli of the physicists’ interest in the
subject.

In the context of nonsimple Lie superalgebras, in 1978 N. Backhouse
gave a classification of real Lie superalgebras, which are not Lie
algebras, up to dimension four (see \cite{BackhouseSuperLie4} and
\cite{BackhouseSuperLie3}). He also noted that none of them are simple.
In 1999, A. Hegazi in \cite{HegaziSuperLieNilpo5} proved a version
of the Kirillov lemma for nilpotent Lie superalgebras and applied
its to the classification of nilpotent Lie superalgebras up to dimension
five. Classifications of five dimensional Lie superalgebras over the
complex and real numbers are contributions of N. L. Matiadou and A.
Fellouris, see \cite[2005]{MatiadouLie5C} and \cite[2007]{MatiadouLie5R}
respectively.

As  expected, there was a similar theory for graded Jordan algebras.
The paper \cite{Kaplansky1975} of I. Kaplansky, lays the foundations
for the study of the Jordan superalgebras case. In 1977, V. G. Kac
developed the Kantor's method and applied it to obtain the classification
of finite dimensional simple Jordan superalgebras over an algebraically
closed field of characteristic $0$ from the classification of simple
graded Lie algebras (see \cite{Kac1977}). There was one missing case
in this classification that was later described by I. L. Kantor in
\cite{kantor1990}. More than two decades later in \cite{Racine2003},
M. L. Racine and E. I. Zel'manov took advantage of Jordan theoretic
methods, like Peirce decomposition and representation theory, that
are less sensitive to the characteristic of the base field to obtain
a classification of simple Jordan superalgebras over fields of characteristic
different from $2$ whose even part is semisimple. The remaining
case, namely simple Jordan superalgebras with nonsemisimple even part,
was tackled in \cite{Martinez-Zelmanov-Prime} by C. Martinez and
E. Zel'manov. 

However, nonsimple Jordan superalgebras are very plentiful and as
yet do not have received enough attention. In what follows we present
the classification of Jordan superalgebras of small dimensions and
we determine in each case whether the superalgebras obtained are special
or exceptional.

Regarding special or exceptional Jordan superalgebras we highlight
some central results. In \cite{Martinez2001} the authors construct
universal associative enveloping algebras for a large class of simple
Jordan superalgebras showing they are special. On the other hand,
in \cite{ZelmanovCounterexamples} has been proved that the $10$-dimensional
simple Kac superalgebra is exceptional and showed some new exceptional
simple superalgebras appear in characteristic $3$.

As for exceptional Jordan superalgebras that are not simple, I. Shestakov
and others in \cite[2000]{ExceptionalShestakov} have constructed an
example of an exceptional Jordan superalgebra of dimension $7$ over
an algebraically closed field of characteristic different of $2$.
So far this is the smallest known example.

The paper is organized as follows. In Section \ref{sec:Preliminaries},
we recall the basic concepts, examples and some results for finite-dimensional
Jordan superalgebras. In Section \ref{sec:Classification}, we describe
all non-isomorphic Jordan superalgebras of dimension less or equal
to $3$ and, finally, we show that all superalgebras are special.

\selectlanguage{brazil}%
\selectlanguage{english}%

\section{Preliminaries\label{sec:Preliminaries}}

In what follows $\ka$ will represent a field of characteristic not
$2$. A \emph{Jordan algebra} $\J$ is a commutative $\ka$-algebra
that satisfies the Jordan identity
\begin{equation}
a^{2}\cdot(b\cdot a)=(a^{2}\cdot b)\cdot a\label{eq:JordanIdentity}
\end{equation}
 or, equivalently, the linearization

\[
(a\text{·}b)\text{·}(c\text{·}d)+(a\text{·}c)\text{·}(b\text{·}d)+(a\text{·}d)\text{·}(b\text{·}c)=((a\text{·}b)\text{·}c)\text{·}d+((a\text{·}d)\text{·}c)\text{·}b+((b\text{·}d)\text{·}c)\text{·}a
\]
for any $a,\mbox{ }b,\mbox{ }c,\mbox{ }d\in\J$.

In \cite{martin} all Jordan algebras up to dimension four over an
algebraically closed field were described. To fix notation we present
the indecomposable Jordan algebras of dimension less or equal to $3$
in Table \ref{tab:IndecompJA3}. This algebras will be used in the
development we present below.

\footnotesize%
\begin{longtable}{c>{\centering}p{7cm}>{\centering}p{1cm}}
\caption{\label{tab:IndecompJA3}Indecomposable Jordan algebras of dimension
$\leq3$ }
\tabularnewline
\toprule 
$\J$ & Multiplication table & $\dim$\tabularnewline
\endfirsthead
\midrule 
$\U_{1}$  & $e_{1}^{2}=e_{1}$ & $1$\tabularnewline
\midrule 
$\U_{2}$  & $e_{1}^{2}=0$ & $1$\tabularnewline
\midrule 
$\mathcal{B}_{1}$  & $e_{1}^{2}=e_{1}\quad e_{1}\cdot e_{2}=e_{2}\quad e_{2}^{2}=0$ & $2$\tabularnewline
\midrule 
$\mathcal{B}_{2}$  & $e_{1}^{2}=e_{1}\quad e_{1}\cdot e_{2}=\frac{1}{2}e_{2}\quad e_{2}^{2}=0$ & $2$\tabularnewline
\midrule 
$\mathcal{B}_{3}$  & $e_{1}^{2}=e_{2}\quad e_{1}\cdot e_{2}=0\quad e_{2}^{2}=0$ & $2$\tabularnewline
\midrule 
$\mathcal{T}_{1}$  & $e_{1}^{2}=e_{1}\quad e_{2}^{2}=e_{3}\quad e_{3}^{2}=0$

$e_{1}\cdot e_{2}=e_{2}\quad e_{1}\cdot e_{3}=e_{3}\quad e_{2}\cdot e_{3}=0$  & $3$\tabularnewline
\midrule 
$\mathcal{T}_{2}$  & $e_{1}^{2}=e_{1}\quad e_{2}^{2}=0\quad e_{3}^{2}=0$

$e_{1}\cdot e_{2}=e_{2}\quad e_{1}\cdot e_{3}=e_{3}\quad e_{2}\cdot e_{3}=0$  & $3$\tabularnewline
\midrule 
$\mathcal{T}_{3}$  & $e_{1}^{2}=e_{2}\quad e_{2}^{2}=0\quad e_{3}^{2}=0$

$e_{1}\cdot e_{2}=e_{3}\quad e_{1}\cdot e_{3}=0\quad e_{2}\cdot e_{3}=0$  & $3$\tabularnewline
\midrule 
$\mathcal{T}_{4}$  & $e_{1}^{2}=e_{2}\quad e_{2}^{2}=0\quad e_{3}^{2}=0$

$e_{1}\cdot e_{2}=0\quad e_{1}\cdot e_{3}=e_{2}\quad e_{2}\cdot e_{3}=0$  & $3$\tabularnewline
\midrule 
$\mathcal{T}_{5}$  & $e_{1}^{2}=e_{1}\quad e_{2}^{2}=e_{2}\quad e_{3}^{2}=e_{1}+e_{2}$

$e_{1}\cdot e_{2}=0\quad e_{1}\cdot e_{3}=\frac{1}{2}e_{3}\quad e_{2}\cdot e_{3}=\frac{1}{2}e_{3}$ & $3$\tabularnewline
\midrule 
$\mathcal{T}_{6}$  & $e_{1}^{2}=e_{1}\quad e_{2}^{2}=0\quad e_{3}^{2}=0$

$e_{1}\cdot e_{2}=\frac{1}{2}e_{2}\quad e_{1}\cdot e_{3}=e_{3}\quad e_{2}\cdot e_{3}=0$  & $3$\tabularnewline
\midrule 
$\mathcal{T}_{7}$  & $e_{1}^{2}=e_{1}\quad e_{2}^{2}=0\quad e_{3}^{2}=0$

$e_{1}\cdot e_{2}=\frac{1}{2}e_{2}\quad e_{1}\cdot e_{3}=\frac{1}{2}e_{3}\quad e_{2}\cdot e_{3}=0$ & $3$\tabularnewline
\midrule 
$\mathcal{T}_{8}$  & $e_{1}^{2}=e_{1}\quad e_{2}^{2}=e_{3}\quad e_{3}^{2}=0$

$e_{1}\cdot e_{2}=\frac{1}{2}e_{2}\quad e_{1}\cdot e_{3}=0\quad e_{2}\cdot e_{3}=0$  & $3$\tabularnewline
\midrule 
$\mathcal{T}_{9}$  & $e_{1}^{2}=e_{1}\quad e_{2}^{2}=e_{3}\quad e_{3}^{2}=0$

$e_{1}\cdot e_{2}=\frac{1}{2}e_{2}\quad e_{1}\cdot e_{3}=e_{3}\quad e_{2}\cdot e_{3}=0$  & $3$\tabularnewline
\midrule 
$\mathcal{T}_{10}$  & $e_{1}^{2}=e_{1}\quad e_{2}^{2}=e_{2}\quad e_{3}^{2}=0$

$e_{1}\cdot e_{2}=0\quad e_{1}\cdot e_{3}=\frac{1}{2}e_{3}\quad e_{2}\cdot e_{3}=\frac{1}{2}e_{3}$  & $3$\tabularnewline
\midrule
\end{longtable}\normalsize

A \emph{superalgebra $A$} is a $\mathbb{Z}_{2}$-graded algebra,
that is, $A=A_{\overline{0}}+A_{\overline{1}}$ where $A_{\overline{0}}$
and $A_{\overline{1}}$ are subspaces of $A$ verifying $A_{\overline{i}}\cdot A_{\overline{j}}\subseteq A_{\overline{i+j}}$
for any $\overline{i},\overline{j}\in\mathbb{Z}_{2}$ (here $+$ is
a direct sum of vector spaces). Therefore, $A_{\overline{0}}$ (called
the \emph{even part} of $A$) is a subalgebra of $A$ and $A_{\overline{1}}$
(the \emph{odd part}) is a bimodule over $A_{\overline{0}}$. If $a\in A_{\overline{i}}$,
then we say that the element $a$ is \emph{homogeneous} and we will
denote by $\left|a\right|$ the \emph{parity} of $a$, that is, $\left|a\right|=i$. 

A subspace $V$ of a superalgebra $A$ is called \emph{sub-superspace}
if $V=(V\cap A_{\overline{0}})+(V\cap A_{\overline{1}})$. By a \emph{sub-superalgebra}
(or \emph{ideal}) of a superalgebra $A$ we mean any sub-superspace
that is a subalgebra (or ideal) of $A$ as an algebra. Given two superalgebras
$A$ and $B$ and a homomorphism of algebras $\phi:A\to B$, we say
$\phi$ is a \emph{homomorphism of superagebras} if it preserves the
grading, i.e., $\phi(A_{\overline{i}})\subseteq B{}_{\overline{i}}$
for $i=0,1$. A superalgebra $A$ is called \emph{simple} if $A^{2}\neq0$
and it does not contain any non-trivial (graded) ideals.

A superalgebra $J=J_{\overline{0}}+J_{\overline{1}}$ is a \emph{Jordan
superalgebra} if it satisfies the supercommutativity
\begin{equation}
a\cdot b=(-1)^{\left|a\right|\left|b\right|}b\cdot a\label{eq:superComutativity}
\end{equation}
 and the super Jordan identity
\begin{multline}
(a\cdot b)\cdot(c\cdot d)+(-1)^{\left|b\right|\left|c\right|}(a\cdot c)\cdot(b\cdot d)+(-1)^{\left|b\right|\left|d\right|+\left|c\right|\left|d\right|}(a\cdot d)\cdot(b\cdot c)=\\
((a\cdot b)\cdot c)\cdot d+(-1)^{\left|c\right|\left|d\right|+\left|b\right|\left|c\right|}((a\cdot d)\cdot c)\cdot b+(-1)^{\left|a\right|\left|b\right|+\left|a\right|\left|c\right|+\left|a\right|\left|d\right|+\left|c\right|\left|d\right|}((b\cdot d)\cdot c)\cdot a\label{eq:superJordanid}
\end{multline}
for any $a,\mbox{ }b,\mbox{ }c,\mbox{ }d\in J$.

The even part $J_{\overline{0}}$ is a Jordan algebra and the odd
part $J_{\overline{1}}$ is an $J_{\overline{0}}$-bimodule with an
operation $J_{\overline{1}}\times J_{\overline{1}}\to J_{\overline{1}}$.

Note that a superidentity is obtained from the corresponding identity
following the Kaplansky rule: any alteration in the order of odd elements
causes a change in sign. 

An associative superalgebra is just an associative algebra with a
$\mathbb{Z}_{2}$-graduation but a Jordan superalgebra is not necessarily
a Jordan algebra. 
\begin{example}
\label{exa:Kaplansky} Consider the three-dimensional simple Jordan
superalgebra due to I. Kaplansky: $K_{3}=\ka e+(\ka x+\ka y)$ with
multiplication given by $e^{2}=e$, $x^{2}=y^{2}=0$, $e\cdot x=x\cdot e=\frac{1}{2}x$,
$e\cdot y=y\cdot e=\frac{1}{2}y$ and $x\cdot y=-y\cdot x=e$. Take
$a=e+x$ and $b=y$ in identity \eqref{eq:JordanIdentity} then $(a^{2}\cdot b)\cdot a-a^{2}(b\cdot a)=a\neq0$.
 
\end{example}

Indeed, a Jordan superalgebra is a Jordan algebra with a $\mathbb{Z}_{2}$-graduation
if and only if $J_{\overline{1}}^{2}=0$. On the other hand, the same
Jordan algebra can have two or more non-isomorphic $\mathbb{Z}_{2}$-graduations. 
\begin{example}
The non-associative three-dimensional Jordan algebra $\T_{6}$ (see
Table \ref{tab:IndecompJA3}) admits four $\mathbb{Z}_{2}$-graduations
that, as we will see in Theorem \ref{thm:superalg-nao-isomorfas},
determine four non-isomorphic Jordan superalgebras: $\S_{4}^{3}$,
$\S_{9}^{3}$, $\S_{12}^{3}$ and the trivial one $\T_{6}^{s}$. 
\end{example}

In fact, an algebra $A$ admits a non-trivial $\mathbb{Z}_{2}$-graduation
if and only if there exists an automorphism $\varphi\in\Aut(A)$ such
as $\varphi^{2}=Id$ and $\varphi\neq Id$. The relation between $\varphi$
and the $\mathbb{Z}_{2}$-graduation is given by $A_{\overline{0}}=\left\{ a\in A\mid\varphi(a)=a\right\} $
and $A_{\overline{1}}=\left\{ a\in A\mid\varphi(a)=-a\right\} $.
\begin{example}
 Consider the three dimensional associative Jordan algebra $\T_{1}$.
We observe that $\Aut(\T_{1})=\left\{ \varphi:\T_{1}\to\T_{1}\mid\varphi(e_{1})=e_{1};\,\varphi(e_{2})=\alpha e_{2}+\beta e_{3};\,\varphi(e_{3})=\alpha^{2}e_{3};\,\text{with }\alpha,\beta\in\ka\right\} $
then $(\T_{1})_{\overline{0}}=\ka e_{1}+\ka e_{3}$ and $(\T_{1})_{\overline{1}}=\ka e_{2}$
is the only (up to graded isomorphism) non-trivial $\mathbb{Z}_{2}$-graduation
on $\T_{1}$. Then it is an associative superalgebra which is not
a Jordan superalgebra since it does not satisfy the supercommutativity.
\end{example}

If $A=A_{\bar{0}}+A_{\bar{1}}$ is an associative superalgebra, we
can get a Jordan superalgebra structure with the same underlying vector
space, $A^{(+)}$, by defining a new product 
\[
a\odot b=\frac{1}{2}(ab+(-1)^{\left|a\right|\left|b\right|}ba)\quad\text{ for any }a,b\in A.
\]

A Jordan superalgebra $J=J_{\overline{0}}+J_{\overline{1}}$ is said
to be \emph{special} is $J$ is a sub-superalgebra of $A^{(+)}$,
for some associative superalgebra $A$. Otherwise, it is called \emph{exceptional}.
We call $A$ an \emph{associative envelope} for $J$. The \emph{universal
associative envelope} $U(J)$ of the Jordan superalgebra $J$ is the
unital associative superalgebra satisfying the following universal
property, which implies that $U(J)$ is unique up to isomorphism:
there is a morphism of Jordan superalgebras $\alpha:J\to U(J)^{(+)}$
such that for any unital associative superalgebra $A$ and any morphism
of Jordan superalgebras $\beta:J\to A^{(+)}$, there is a unique morphism
of associative superalgebras $\gamma:U(J)\to A$ satisfying $\beta=\gamma\lyxmathsym{◦}\alpha$.

\begin{example}
\label{exa:K3-is-special}The \emph{Kaplansky superalgebra} $K_{3}$
of Example \ref{exa:Kaplansky} is special, in fact it is a sub-su\-per\-al\-ge\-bra
of $M_{1\mid1}^{(+)}(W_{1})$ where $W_{1}=\operatorname{alg}\left\langle \xi,\eta:\xi\cdot\eta-\eta\cdot\xi=1\right\rangle $
is the Weyl algebra. The even part and the odd part are respectively,
\[
\left\{ \left(\begin{array}{cc}
a & 0\\
0 & b
\end{array}\right)\mid a,b\in W_{1}\right\} \text{ and }\left\{ \left(\begin{array}{cc}
0 & c\\
d & 0
\end{array}\right)\mid c,d\in W_{1}\right\} ,
\]
 then the application of $K_{3}$ in $M_{1\mid1}^{(+)}(W_{1})$ such
as $e\mapsto e_{11}$, $x\mapsto2e_{12}+2\xi e_{21}$ and $y\mapsto\eta e_{12}+\xi\eta e_{21}$
is an homomorphism.
\end{example}

\begin{example}
\label{exa:superform-is-special}(\emph{Superalgebra of a superform})
Let $V=V_{\overline{0}}\oplus V_{\overline{1}}$ be a vector superspace
over a field $\ka$, and let $f:V\times V\to\ka$ be a \emph{supersymmetric
bilinear form}, i.e., $f\mid_{V_{\overline{0}}}$ is symmetric, $f\mid_{V_{\overline{1}}}$
is skew-symmetric and $f(V_{\overline{i}},V_{\overline{j}})=0$ if
$i\neq j$. Then $J(V,f)=\ka\cdot1\oplus V$ is a Jordan superalgebra
with $J_{\overline{0}}=\ka\cdot1\oplus V_{\overline{0}}$, $J_{\overline{1}}=V_{\overline{1}}$
and the product given by $(\alpha1+v)\cdot(\beta1+w)=(\alpha\beta+f(v,w))\cdot1+(\alpha w+\beta v)$.
 Moreover, suppose that $f$ is non-degenerate, $\dim V_{\overline{0}}=n$
and $\dim V_{\overline{1}}=2m$. It follows from \cite{Martinez2001}
that $J(V,f)$ is an special Jordan superalgebra. In fact $J(V,f)\subseteq(\operatorname{Cl}(n)\widetilde{\otimes}_{\ka}W_{m})^{(+)}$,
where $\operatorname{Cl}(n)=\left\langle 1,e_{1},\dots e_{n}\mid e_{i}e_{j}+e_{j}e_{i}=0,\,i\neq j,\,e_{i}^{2}=1\right\rangle $
is the Clifford superalgebra and $W_{m}=\left\langle 1,x_{i},y_{i},1\leq i\leq m\mid[x_{i},y_{j}]=\delta_{ij},[x_{i},x_{j}]=[y_{i},y_{j}]=0\right\rangle $
is the Weyl superalgebra with $\left|e_{i}\right|=\left|x_{j}\right|=\left|y_{k}\right|=1$.

\end{example}

\begin{example}
Algebra $\T_{6}$ in Table \ref{tab:IndecompJA3} is special as an
ordinary Jordan algebra. Its universal associative envelope is the
five-dimensional algebra given by 
\[
\begin{array}{c|ccccc}
U(\T_{6}) & 1 & e_{1} & e_{2} & e_{3} & e_{4}\\
\hline 1 & 1 & e_{1} & e_{2} & e_{3} & e_{4}\\
e_{1} & e_{1} & e_{1} & e_{4} & e_{3} & e_{4}\\
e_{2} & e_{2} & e_{2}-e_{4} & 0 & 0 & 0\\
e_{3} & e_{3} & e_{3} & 0 & 0 & 0\\
e_{4} & e_{4} & 0 & 0 & 0 & 0
\end{array}
\]
with homomorphism $e_{i}\mapsto e_{i}$ for $i=1,2,3$. Now, consider
the $\mathbb{Z}_{2}$-graduation in $\T_{6}$ which result in the
superalgebra $\S_{12}^{3}$ (see Table \ref{tab:(2,1)B2}) with $\dim J_{\overline{0}}=2$
and $\dim J_{\overline{1}}=1$. Then the graduation induced on $U(\T_{6})=A_{\overline{0}}+A_{\overline{1}}$
where $A_{\overline{0}}=\{1,e_{1},e_{2},e_{4}\}$ and $A_{\overline{1}}=\{e_{3}\}$
is an associative envelope for $\S_{12}^{3}$ but is not the universal
one. In this case, $U(\S_{12}^{3})$ is the superalgebra with basis
$\left\{ 1,e_{1},e_{2},e_{1}e_{2},o_{1}^{k},\:k\geq1\right\} $ quotient
with the ideal: 
\[
\left\langle e_{2}^{2},\,e_{2}o_{1},\,o_{1}e_{2},\,e_{1}^{2}-e_{1},\,e_{1}o_{1}-o_{1},\,o_{1}e_{1}-o_{1},\,e_{2}e_{1}-e_{2}+e_{1}e_{2}\right\rangle .
\]
\end{example}

\begin{example}
In \cite{Kac1977} the $10$-dimensional \emph{Kac superalgebra} $K$
was defining. It is given by $K_{\overline{0}}=\left\langle a_{1},\dots,a_{6}\right\rangle $
and $K_{\overline{1}}=\left\langle \xi_{1},\dots,\xi_{4}\right\rangle $
with
\begin{gather*}
a_{1}\cdot a_{i}=a_{i},\quad{\scriptstyle i=1,\dots,5}\quad a_{1}\cdot a_{6}=0,\quad a_{1}\cdot\xi_{i}=\frac{1}{2}\xi_{i},\quad{\scriptstyle i=1,\dots,4};\\
a_{2}\cdot a_{3}=a_{1},\quad a_{2}\cdot\xi_{3}=\xi_{1},\quad a_{2}\cdot\xi_{4}=\xi_{2},\quad a_{3}\cdot\xi_{1}=\frac{1}{2}\xi_{3},\quad a_{3}\cdot\xi_{2}=\frac{1}{2}\xi_{4};\\
a_{4}\cdot a_{5}=a_{1},\quad a_{4}\cdot\xi_{2}=\xi_{1},\quad a_{4}\cdot\xi_{4}=\xi_{3},\quad a_{5}\cdot\xi_{1}=\frac{1}{2}\xi_{2},\quad a_{5}\cdot\xi_{3}=\frac{1}{2}\xi_{4};\\
a_{6}^{2}=a_{6},\quad a_{6}\cdot\xi_{i}=\frac{1}{2}\xi_{i},\quad{\scriptstyle i=1,\dots,4};\\
\xi_{1}\cdot\xi_{2}=a_{2},\quad\xi_{1}\cdot\xi_{3}=a_{4},\quad\xi_{1}\cdot\xi_{4}=a_{1}+a_{6};\\
\xi_{2}\cdot\xi_{3}=a_{1}+a_{6},\quad\xi_{2}\cdot\xi_{4}=a_{5},\quad\xi_{3}\cdot\xi_{4}=a_{3};
\end{gather*}

and products which are not obtained from these transpositions of factors
are equal to $0$. In \cite{ZelmanovCounterexamples} the authors
proved that $K$ is exceptional. If $\car\ka=3$ this superalgebra
is not simple, but it has a simple subalgebra of dimension $9$, called
the \emph{degenerated Kac superalgebra}. There are two other examples
of exceptional simple Jordan superalgebras in $\car\ka=3$, see \cite{Racine2003}.
\end{example}

\begin{example}
\label{exa:exceptional7}An example of non-simple exceptional Jordan
superalgebra was obtained in \cite{ExceptionalShestakov}. It is the
$7$-dimensional superalgebra with even part $J_{\overline{0}}=\T_{7}$,
odd part generated by $\{o_{1},o_{2},o_{3},o_{4}\}$ and multiplication
given by
\begin{alignat*}{1}
e_{1}\cdot o_{i} & =o_{i}\cdot e_{1}=\frac{1}{2}o_{i}\ \text{ for }i=2,3\quad n_{1}\cdot o_{j}=o_{j}\cdot n_{1}=o_{j+1}\ \text{ for }j=1,3\\
n_{2}\cdot o_{1} & =o_{1}\cdot n_{2}=o_{3}\quad n_{2}\cdot o_{2}=o_{2}\cdot n_{2}=-o_{4}\quad o_{1}\cdot o_{2}=-o_{2}\cdot o_{1}=n_{2}
\end{alignat*}
with the other product being zero.
\end{example}

\subsection{Peirce decomposition for Jordan superalgebras}

Suppose that a Jordan superalgebra $J$ possess an idempotent element
then it has an homogeneous idempotent $e$ such as $e\in J_{\overline{0}}$.
Hence \eqref{eq:superJordanid} gives
\begin{gather*}
0=((a\cdot e)\cdot e)\cdot e+((a\cdot e)\cdot e)\cdot e+a\cdot e-(a\cdot e)\cdot e-(a\cdot e)\cdot e-(a\cdot e)\cdot e\\
0=2((a\cdot e)\cdot e)\cdot e-3(a\cdot e)\cdot e+a\cdot e\\
0=[2R_{e}^{3}-3R_{e}^{2}+R_{e}](a)
\end{gather*}
for any element $a\in J$, then $2R_{e}^{3}-3R_{e}^{2}+R_{e}=0$ (where
$R_{e}:J\to J$ denote the right multiplication by $e$). Using the
standard argument for the Peirce decomposition for Jordan algebras
(see \cite{jacobson}), we obtain:
\begin{thm}
Let $e\in J$ be an homogeneous idempotent element, then $J$ is a
direct sum of vector spaces 
\begin{equation}
J=\mathcal{\mathcal{P}}_{0}\oplus\mathcal{\mathcal{P}}_{\frac{1}{2}}\oplus\mathcal{P}_{1},\text{ where }\ensuremath{\mathcal{P}_{i}=\left\{ x\in J\mid x\cdot e=e\cdot x=ix\right\} }\text{ with }\ensuremath{i=0,\,\frac{1}{2},\,1}\label{eq:Peirce1}
\end{equation}
which have the following multiplicative properties:\textup{
\begin{align}
\mathcal{\mathcal{P}}_{1}^{2} & \subseteq\mathcal{P}_{1},\quad\mathcal{\mathcal{P}}_{1}\cdot\mathcal{\mathcal{P}}_{0}=(0),\quad\mathcal{\mathcal{P}}_{0}^{2}\subseteq\mathcal{\mathcal{P}}_{0}\text{,}\label{eq:Peirce1-1}\\
\mathcal{\mathcal{P}}_{0}\cdot\mathcal{\mathcal{P}}_{\frac{1}{2}} & \subseteq\mathcal{\mathcal{P}}_{\frac{1}{2}},\quad\mathcal{\mathcal{P}}_{1}\cdot\mathcal{\mathcal{P}}_{\frac{1}{2}}\subseteq\mathcal{\mathcal{P}}_{\frac{1}{2}},\quad\mathcal{\mathcal{P}}_{\frac{1}{2}}^{2}\subseteq\mathcal{\mathcal{P}}_{0}+\mathcal{\mathcal{P}}_{1}\text{.}\label{eq:Peirce1-2}
\end{align}
}
\end{thm}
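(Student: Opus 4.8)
The plan is to mimic the classical Peirce decomposition, exploiting the operator identity $2R_{e}^{3}-3R_{e}^{2}+R_{e}=0$ obtained just above together with two carefully chosen specializations of the linearized super Jordan identity \eqref{eq:superJordanid}. Throughout I write $w=u\cdot v$ and use $R_{e}w=w\cdot e$.

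First I would set up the direct sum. Since $\left|e\right|=0$, supercommutativity gives $e\cdot x=x\cdot e$ for every $x$, so left and right multiplication by $e$ coincide and the two-sided condition defining $\mathcal{P}_{i}$ reduces to $\ker(R_{e}-i)$. The polynomial $2t^{3}-3t^{2}+t=t(2t-1)(t-1)$ annihilates $R_{e}$ and, because $\car\ka\neq2$, it has the three \emph{distinct} roots $0,\tfrac{1}{2},1$; hence $R_{e}$ is diagonalizable and $J=\ker R_{e}\oplus\ker(R_{e}-\tfrac{1}{2})\oplus\ker(R_{e}-1)=\mathcal{P}_{0}\oplus\mathcal{P}_{\frac{1}{2}}\oplus\mathcal{P}_{1}$. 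As $e$ is even, $R_{e}$ preserves parity, so each $\mathcal{P}_{i}$ is automatically a sub-superspace.

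For the multiplication rules, fix $u\in\mathcal{P}_{i}$ and $v\in\mathcal{P}_{j}$. The key device is the substitution $(a,b,c,d)=(e,u,v,e)$ in \eqref{eq:superJordanid}: using $e\cdot u=iu$, $v\cdot e=jv$, $e\cdot e=e$ and the relation $v\cdot u=(-1)^{\left|u\right|\left|v\right|}w$, every term collapses to a multiple of $w$ or of $R_{e}w$, and after collecting signs one is left with the single relation $(1-2i)(R_{e}w-jw)=0$. Whenever $i\in\{0,1\}$ the scalar $1-2i$ is invertible, forcing $R_{e}w=jw$, i.e. $w\in\mathcal{P}_{j}$. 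This at once yields $\mathcal{P}_{1}^{2}\subseteq\mathcal{P}_{1}$, $\mathcal{P}_{0}^{2}\subseteq\mathcal{P}_{0}$, $\mathcal{P}_{1}\cdot\mathcal{P}_{\frac{1}{2}}\subseteq\mathcal{P}_{\frac{1}{2}}$ and $\mathcal{P}_{0}\cdot\mathcal{P}_{\frac{1}{2}}\subseteq\mathcal{P}_{\frac{1}{2}}$; and applying the relation to both orders $u\cdot v$ (with $i=1,j=0$) and $v\cdot u$ (with $i=0,j=1$) places $w$ in $\mathcal{P}_{0}\cap\mathcal{P}_{1}=(0)$, giving $\mathcal{P}_{1}\cdot\mathcal{P}_{0}=(0)$.

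The only case left untreated is $i=j=\tfrac{1}{2}$, where $1-2i=0$ makes the previous relation vacuous. Here I would instead use the symmetric substitution $(a,b,c,d)=(u,v,e,e)$, which after the same bookkeeping gives $R_{e}^{2}w-R_{e}w+(i-j)^{2}w=0$; for $i=j=\tfrac{1}{2}$ this is $R_{e}(R_{e}-1)w=0$, so $w\in\mathcal{P}_{0}\oplus\mathcal{P}_{1}$, which is $\mathcal{P}_{\frac{1}{2}}^{2}\subseteq\mathcal{P}_{0}\oplus\mathcal{P}_{1}$ and completes the list. The main obstacle is essentially bookkeeping: keeping the signs coming from supercommutativity straight, and recognizing that the symmetric substitution alone only sees $(i-j)^{2}$ and hence cannot separate the $\mathcal{P}_{0}$ and $\mathcal{P}_{1}$ components on the diagonal $i=j$ — precisely the defect that the asymmetric substitution $(e,u,v,e)$ repairs. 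Since both chosen substitutions keep the even element $e$ in the last slot, the sign exponents that involve $\left|d\right|$ vanish and the sign arithmetic stays transparent.
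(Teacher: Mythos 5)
Your proof is correct and follows essentially the same route as the paper: the operator identity $2R_{e}^{3}-3R_{e}^{2}+R_{e}=0$ with three distinct roots gives the eigenspace decomposition, a specialization of \eqref{eq:superJordanid} with two slots equal to $e$ yields the relation $(1-2i)\left((u\cdot v)\cdot e-j(u\cdot v)\right)=0$ handling all cases with $i\in\{0,1\}$ (including $\mathcal{P}_{1}\cdot\mathcal{P}_{0}=(0)$ by comparing both orders), and the substitution $c=d=e$ settles $\mathcal{P}_{\frac{1}{2}}^{2}\subseteq\mathcal{P}_{0}+\mathcal{P}_{1}$. The only cosmetic difference is the placement of $e$ in the first substitution, which produces the identical scalar identity.
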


Indeed, put $b=d=e$ in \eqref{eq:superJordanid} and take $a\in\P_{i}$
and $c\in\P_{j}$ to obtain
\begin{eqnarray}
(1-2i)(a\cdot c)\cdot e & = & (-1)^{\left|a\right|\left|c\right|}j(c\cdot a)-2ij(a\cdot c).\label{eq:Peirce1-3}\\
 & = & (-1)^{2\left|a\right|\left|c\right|}j(a\cdot c)-2ij(a\cdot c)=j(1-2i)(a\cdot c)
\end{eqnarray}
 If $i=j=0$ we get $(a\cdot c)\cdot e=0$ and if $i=j=1$ we have
$(a\cdot c)\cdot e=a\cdot c$. Also $i=0,\:j=1$ and $i=1,\:j=0$
give $(a\cdot c)\cdot e=(a\cdot c)$ and $(a\cdot c)\cdot e=0$, respectively.
Hence $0=(a\cdot c)\cdot e=(-1)^{\left|a\right|\left|c\right|}(c\cdot a)\cdot e=(-1)^{\left|a\right|\left|c\right|}(c\cdot a)$,
thus $c\cdot a=0=a\cdot c$ and \eqref{eq:Peirce1-1} holds. Similarly,
$i=0,1$ and $j=\frac{1}{2}$ in \eqref{eq:Peirce1-3} give $\mathcal{\mathcal{P}}_{i}\cdot\mathcal{\mathcal{P}}_{\frac{1}{2}}\subseteq\mathcal{\mathcal{P}}_{\frac{1}{2}}$
for $i=0,1$. Now, putting $c=d=e$ and taking $a,b\in\P_{\frac{1}{2}}$
in \eqref{eq:superJordanid} we obtain
\[
(a\cdot b)\cdot e+\frac{1}{2}(a\cdot b)=((a\cdot b)\cdot e)\cdot e+\frac{1}{4}a\cdot b+(-1)^{\left|a\right|\left|b\right|}\frac{1}{4}b\cdot a=((a\cdot b)\cdot e)\cdot e+\frac{1}{2}a\cdot b.
\]
Writing $a\cdot b=c_{0}+c_{1}+c_{\frac{1}{2}}$ where $c_{i}\in\P_{i}$
we have $\left(\frac{1}{2}-\frac{1}{4}\right)c_{\frac{1}{2}}=0$.
Hence $c_{\frac{1}{2}}=0$ and $a\cdot b\in\P_{0}+\P_{1}$. Thus \eqref{eq:Peirce1-2}
holds.

More generally, we have the refined Peirce decomposition:
\begin{thm}
If $e=\sum_{i=1}^{n}e_{i}$, is a sum of pairwise orthogonal idempotents,
then $J$ decomposes into a direct sum of subspaces
\begin{equation}
J=\bigoplus_{0\leq i\leq j\leq n}\mathcal{\mathcal{P}}_{ij}\label{eq:Peircen}
\end{equation}
where
\begin{align*}
\mathcal{P}_{00} & =\left\{ x\in\mathcal{J}\mid x\cdot e_{i}=0\text{ for all }i\right\} ,\\
\mathcal{P}_{ii} & =\left\{ x\in\mathcal{J}\mid x\cdot e_{i}=x,\,x\cdot e_{k}=0\text{ for }k\neq i\text{ and }i\neq0\right\} ,\\
\mathcal{P}_{0i} & =\left\{ x\in\mathcal{J}\mid x\cdot e_{i}=\frac{1}{2}x,\,x\cdot e_{k}=0\text{ for }k\neq i\text{ and }i\neq0\right\} ,\\
\mathcal{P}_{ij} & =\left\{ x\in\mathcal{J}\mid x\cdot e_{i}=x\cdot e_{j}=\frac{1}{2}x\:i\neq j,\,x\cdot e_{k}=0\text{ for }0\neq i<j\text{ and }k\neq i,j\right\} 
\end{align*}
\end{thm}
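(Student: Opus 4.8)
The plan is to reduce the refined decomposition to the basic Peirce decomposition of the preceding theorem applied to each $e_i$ separately, and then to glue these decompositions together by simultaneous diagonalization. We may take all the $e_i\in J_{\overline{0}}$, as in the single-idempotent case.

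First I would observe that $e=\sum_{i=1}^{n}e_i$ is itself an even idempotent: since the $e_i$ are pairwise orthogonal and each idempotent, $e^{2}=\sum_{i,j}e_i\cdot e_j=\sum_i e_i=e$. Each right multiplication $R_{e_i}$ therefore satisfies the cubic $2R_{e_i}^{3}-3R_{e_i}^{2}+R_{e_i}=0$ derived before the basic theorem; as $\car\ka\neq2$ this polynomial has the three distinct roots $0,\tfrac12,1$, so every $R_{e_i}$ is diagonalizable with eigenvalues among $\{0,\tfrac12,1\}$, giving $J=\P_{0}(e_i)\oplus\P_{\frac12}(e_i)\oplus\P_{1}(e_i)$ for each $i$.

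The key step is to show that $R_{e_{1}},\dots,R_{e_{n}}$ pairwise commute, and I would argue this without re-expanding the super Jordan identity, using instead the multiplicative relations \eqref{eq:Peirce1-1}--\eqref{eq:Peirce1-2}. For $i\neq j$ orthogonality gives $e_j\in\P_{0}(e_i)$; then those relations show that right multiplication by $e_j$ sends $\P_{1}(e_i)$ into $\P_{1}(e_i)\cdot\P_{0}(e_i)=(0)$, sends $\P_{0}(e_i)$ into $\P_{0}(e_i)^{2}\subseteq\P_{0}(e_i)$, and sends $\P_{\frac12}(e_i)$ into $\P_{0}(e_i)\cdot\P_{\frac12}(e_i)\subseteq\P_{\frac12}(e_i)$. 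Hence $R_{e_j}$ preserves each eigenspace of $R_{e_i}$, on which $R_{e_i}$ acts as a scalar, and therefore $R_{e_i}R_{e_j}=R_{e_j}R_{e_i}$ on each summand, hence on all of $J$. I expect this commutation to be the main obstacle, but routing it through the coarse Peirce relations keeps it short.

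A commuting family of diagonalizable operators is simultaneously diagonalizable, so $J=\bigoplus_{\lambda}J_{\lambda}$ where $\lambda=(\lambda_{1},\dots,\lambda_{n})\in\{0,\tfrac12,1\}^{n}$ and $J_{\lambda}=\{x\in\J\mid x\cdot e_i=\lambda_i x\text{ for all }i\}$. To finish I would cut down the index set: since $R_e=\sum_i R_{e_i}$ and $e$ is idempotent, $R_e$ likewise has eigenvalues only in $\{0,\tfrac12,1\}$, so a nonzero $J_{\lambda}$ forces $\sum_i\lambda_i\in\{0,\tfrac12,1\}$. Combined with $\lambda_i\in\{0,\tfrac12,1\}$, this leaves exactly four surviving patterns: all $\lambda_i=0$ (this is $\P_{00}$); a single $\lambda_i=1$ with the rest $0$ (this is $\P_{ii}$); a single $\lambda_i=\tfrac12$ with the rest $0$ (this is $\P_{0i}$); and exactly two indices $i<j$ with $\lambda_i=\lambda_j=\tfrac12$ and the rest $0$ (this is $\P_{ij}$). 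Matching these eigenvalue patterns with the definitions in the statement yields the asserted decomposition \eqref{eq:Peircen}.
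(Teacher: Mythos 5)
Your reduction to the one-idempotent case, the commutation of the operators $R_{e_{1}},\dots,R_{e_{n}}$ via the coarse Peirce relations, and the simultaneous diagonalization are all correct, and this is essentially the standard route (the paper itself gives no proof and simply cites the classical argument). The gap is in the final step, where you discard eigenvalue patterns by requiring $\sum_{i}\lambda_{i}\in\{0,\tfrac{1}{2},1\}$: this is a condition on elements of $\ka$, and the paper assumes only $\car\ka\neq2$. In characteristic $3$ one has $\tfrac{1}{2}=2$, so $\{0,\tfrac{1}{2},1\}$ is the whole prime field and any sum of such $\lambda_{i}$ automatically lies in it; for instance the patterns $(1,1)$, $(1,\tfrac{1}{2})$ and $(\tfrac{1}{2},\tfrac{1}{2},\tfrac{1}{2})$ all pass your test. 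Since the paper explicitly allows (and elsewhere discusses) characteristic $3$, the assertion that ``this leaves exactly four surviving patterns'' is false as stated, and the argument does not establish the theorem in that case.

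The fix is to replace the scalar arithmetic by module-theoretic exclusions, most of which you already have in hand. Your commutation argument shows that $R_{e_{j}}$ annihilates $\P_{1}(e_{i})$ for $j\neq i$, i.e.\ $\P_{1}(e_{i})\subseteq\P_{0}(e_{j})$; this kills every pattern containing a $1$ together with any other nonzero entry. For patterns with three or more entries equal to $\tfrac{1}{2}$, observe that if $x\in\P_{\frac{1}{2}}(e_{i})\cap\P_{\frac{1}{2}}(e_{j})$ then $x\cdot(e_{i}+e_{j})=x$, and $e_{i}+e_{j}$ is an idempotent orthogonal to every $e_{k}$ with $k\neq i,j$; hence $x\cdot e_{k}\in\P_{1}(e_{i}+e_{j})\cdot\P_{0}(e_{i}+e_{j})=(0)$, forcing $\lambda_{k}=0$ for all $k\neq i,j$. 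With these two observations the four listed types are exactly the surviving patterns in every characteristic different from $2$, and the rest of your proof goes through unchanged.
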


Note that $\P_{00}=\cap_{i=1}^{n}\P_{0}(e_{i})$, $\P_{ii}=\P_{1}(e_{i})\cap_{\underset{k\neq i}{k=1}}^{n}\P_{0}(e_{k})$,
$\P_{0i}=\P_{\frac{1}{2}}(e_{i})\cap_{\underset{k\neq i}{k=1}}^{n}\P_{0}(e_{k})$,
$\P_{ij}=\P_{\frac{1}{2}}(e_{i})\cap\P_{\frac{1}{2}}(e_{j})\cap_{\underset{k\neq i,j}{k=1}}^{n}\P_{0}(e_{k})$
for $0\neq i<j$ and $\P_{00}=\P_{0i}=0$ if $e=1$ is unity in $J$.
Then using the previous decomposition relative to one idempotent we
have that the Peirce components $\mathcal{\mathcal{P}}_{ij}$ satisfy
the following relations:
\begin{align*}
\mathcal{\mathcal{\mathcal{P}}}_{ii}^{2} & \subseteq\mathcal{\mathcal{\mathcal{P}}}_{ii},\quad\mathcal{\mathcal{\mathcal{P}}}_{ij}\cdot\mathcal{\mathcal{\mathcal{P}}}_{ii}\subseteq\mathcal{\mathcal{\mathcal{P}}}_{ij},\quad\mathcal{\mathcal{\mathcal{P}}}_{ij}^{2}\subseteq\mathcal{\mathcal{\mathcal{P}}}_{ii}+\mathcal{\mathcal{\mathcal{P}}}_{jj},\\
\mathcal{\mathcal{\mathcal{P}}}_{ij}\cdot\mathcal{\mathcal{\mathcal{P}}}_{jk} & \subseteq\mathcal{\mathcal{\mathcal{P}}}_{ik},\quad\mathcal{\mathcal{\mathcal{P}}}_{ii}\cdot\mathcal{\mathcal{\mathcal{P}}}_{jj}=\mathcal{\mathcal{\mathcal{P}}}_{ii}\cdot\mathcal{\mathcal{\mathcal{P}}}_{jk}=\mathcal{\mathcal{\mathcal{P}}}_{ij}\cdot\mathcal{\mathcal{\mathcal{P}}}_{kl}=(0)
\end{align*}
where the indices $i$, $j$, $k$, $l$ are all distinct.

Peirce decompositions are inherited by ideals or by subalgebras containing
$e_{1},\dots,e_{n}$. The latter is the case of $J_{\overline{0}}$
since the idempotents are supposed to be homogeneous. On the other
hand, note that this also happens in $J_{\overline{1}}$. Indeed,
taking $x\in J_{\overline{1}}$ we can write $x=\varSigma p_{ij}$
with $p_{ij}\in\P_{ij}$ , $0\leq i\leq j\leq n$. Multiplying successively
$x$ by appropriate idempotents $e_{i}$ and using the fact that $e_{i}\cdot J_{\overline{1}}\subseteq J_{\overline{1}}$
we get $p_{ij}\in J_{\overline{1}}$.  Therefore $J_{\overline{1}}=\bigoplus_{0\leq i\leq j\leq n}\P_{ij}^{\overline{1}}$
where $\P_{ij}^{\overline{1}}=J_{\overline{1}}\cap\P_{ij}$.

\section{Classification of Jordan superalgebras\label{sec:Classification}}

In the next sections we will present the complete classification of
Jordan superalgebras of dimensions $1,2$ and $3$. From now on $\ka$
will be understood as an algebraically closed field of $\car\neq2$.
Henceforth, for convenience we drop $\cdot$ and denote the multiplication
in a superalgebra $J$ simply as $ab$. Products that we do not specify
are understood to be $0$ or be determined by commutativity or anticommutativity. 

We will say that $J=J_{\overline{0}}+J_{\overline{1}}$ is a Jordan
superalgebra of type $(n,m)$ if $n=\dim J_{\overline{0}}$ and $m=\dim J_{\overline{1}}$.
We will denote by $e_{i}$ the elements in $J_{\overline{0}}$ and
by $o_{i}$ the ones which belong to $J_{\overline{1}}$. It follows
from the supercommutativity \eqref{eq:superComutativity} that $o_{i}^{2}=0$
for $i=1,\dots,m$. Every indecomposable superalgebra obtained in
the classification will be denoted by $\S_{j}^{i}$ where the superscript
$i$ represent its dimension. 

The description of Jordan superalgebras will be organized according
to all possible values of $n$ and $m$. For example, we will look
at the most trivial cases. Suppose that $n=0$, then $\{o_{1},\dots,o_{m}\}$
is a basis for $J=J_{\overline{1}}$ an $m$-dimensional Jordan superalgebra
of type $(0,m)$. Since $o_{i}o_{j}\in J_{\overline{0}}=0$ for any
$1\leq i,j\leq m$ then there exists only one such $J$ and it is
a direct sum of $m$ copies of $\S_{1}^{1}=\ka o_{1}$ where $o_{1}^{2}=0$.
On the other hand, the Jordan superalgebras of type $(n,0)$ correspond
to the ordinary $n$-dimensional Jordan algebras with the trivial
$\mathbb{Z}_{2}$-graduation. We denote trivial indecomposable superalgebras
by $\U_{i}^{s}$, $\B_{j}^{s}$ and $\T_{k}^{s}$ with multiplication
given, respectively, by $\U_{i}$, $\B_{j}$ and $\T_{k}$ in Table
\ref{tab:IndecompJA3} for $i=1,2$, $j=1,2,3$ and $1\leq k\leq10$.
Henceforward, we will suppose $n$ and $m$ both not null.

\subsection{Jordan superalgebras of dimension $2$}

Suppose that $J=J_{\overline{0}}+J_{\overline{1}}$ is a Jordan superalgebra
of type $(1,1)$ and let $\{e_{1},o_{1}\}$ be a basis of $J$. According
to Table \ref{tab:IndecompJA3}, we have two possibilities for the
even part $J_{\overline{0}}=\U_{1}$ or $J_{\overline{0}}=\U_{2}$.
Since $o_{1}^{2}=0$ we only have to determinate $\beta\in\ka$ such
as $e_{1}o_{1}=\beta o_{1}$.

Writing the Jordan superidentity \eqref{eq:superJordanid} for the
basis in the first case, i.e. $J_{\overline{0}}=\U_{1}$, we obtain
only one equation $(\beta-1)\beta(2\beta-1)o_{1}=0$ with solutions
$\beta=0,\frac{1}{2},1$ which defines, respectively, three non-isomorphic
superalgebras: $\U_{1}^{s}\oplus\S_{1}^{1}$, $\S_{1}^{2}$ and $\S_{2}^{2}$
(see Table \ref{tab:JAvsJSA-2}). Proceeding in the same way in case
of $J_{\overline{0}}=\U_{2}$ we still obtain one equation $2\beta^{3}o_{1}=0$,
then necessarily $\beta=0$ and the superalgebra is $\U_{2}^{s}\oplus\S_{1}^{1}$.

Every Jordan superalgebra of dimension two satisfies $J_{\overline{1}}^{2}=0$
so they are ordinary Jordan algebras. Since non-isomorphic Jordan
algebras have non-isomorphic $\mathbb{Z}_{2}$-graduations, superalgebras
in different lines in Table \ref{tab:JAvsJSA-2} are non-isomorphic.
On the other hand, isomorphism of superalgebras preserves the grading
then superalgebras in different columns in Table \ref{tab:JAvsJSA-2}
are also non-isomorphic. Consequently, all two-dimensional Jordan
superalgebras here presented are pairwise non-isomorphics.\footnotesize%
\begin{longtable}{c>{\centering}p{2.5cm}>{\centering}p{4.5cm}>{\centering}p{2.5cm}}
\caption{\label{tab:JAvsJSA-2}Two-dimensional Jordan superalgebras}
\tabularnewline
\endfirsthead
\toprule 
\multirow{2}{*}{Algebra} & \multicolumn{3}{c}{Superalgebras}\tabularnewline
\cmidrule{2-4} 
 & Type $(0,2)$ & Type $(1,1)$ & Type $(2,0)$\tabularnewline
\midrule
$\mathcal{B}_{1}$  &  & $\S_{2}^{2}:\;e_{1}^{2}=e_{1}\ e_{1}o_{1}=o_{1}$ & $\B_{1}^{s}$\tabularnewline
$\mathcal{B}_{2}$  &  & $\S_{1}^{2}:\;e_{1}^{2}=e_{1}\ e_{1}o_{1}=\frac{1}{2}o_{1}$ & $\B_{2}^{s}$\tabularnewline
$\mathcal{B}_{3}$  &  &  & $\B_{3}^{s}$\tabularnewline
$\U_{1}\oplus\U_{1}$ &  &  & $\U_{1}^{s}\oplus\U_{1}^{s}$\tabularnewline
$\U_{1}\oplus\U_{2}$ &  & $\U_{1}^{s}\oplus\S_{1}^{1}$ & $\U_{1}^{s}\oplus\U_{2}^{s}$\tabularnewline
$\U_{2}\oplus\U_{2}$ & $\S_{1}^{1}\oplus\S_{1}^{1}$ & $\U_{2}^{s}\oplus\S_{1}^{1}$ & $\U_{2}^{s}\oplus\U_{2}^{s}$\tabularnewline
\midrule
\end{longtable}\normalsize

\subsection{Jordan superalgebras of dimension $3$\label{subsec:classification3}}

\subsubsection{Jordan superalgebras of type $(1,2)$}

Let $J$ be a three-dimensional Jordan superalgebra of type $(1,2)$.
The even part, $J_{\overline{0}}$, is one of the one-dimensional
Jordan algebras $J_{\overline{0}}=\U_{1}$ or $J_{\overline{0}}=\U_{2}$.
If $\{\bar{e}_{1},\bar{o}_{1},\bar{o}_{2}\}$ is a basis of $J$,
we must determine the products
\[
\bar{e}_{1}\bar{o}_{1}=\alpha\bar{o}_{1}+\beta\bar{o}_{2},\quad\bar{e}_{1}\bar{o}_{2}=\gamma\bar{o}_{1}+\delta\bar{o}_{2}\quad\bar{o}_{1}\bar{o}_{2}=\varepsilon\bar{e}_{1}.
\]
\begin{enumerate}
\item Suppose that $J_{\overline{0}}=\U_{2}$. We claim that $J$ is one
of the superalgebras in Table \ref{tab:(1,2)J3}. Indeed, the Jordan
superidentity \eqref{eq:superJordanid} for the basis provides the
system of equations 
\begin{align*}
2\left(\alpha^{3}+2\alpha\beta\gamma+\beta\gamma\delta\right)\bar{o}_{1}+2\beta\left(\alpha^{2}+\alpha\delta+\beta\gamma+\delta^{2}\right)\bar{o}_{2} & =0\\
2\varepsilon(\alpha\delta-\beta\gamma)\bar{e}_{1} & =0\\
\varepsilon\left((\alpha-\delta)^{2}+4\beta\gamma\right)\bar{e}_{1} & =0\\
2\gamma\left(\alpha^{2}+\alpha\delta+\beta\gamma+\delta^{2}\right)\bar{o}_{1}+2\left(\alpha\beta\gamma+2\beta\gamma\delta+\delta^{3}\right)\bar{o}_{2} & =0\\
\varepsilon\left(\alpha^{2}-\alpha\delta+2\beta\gamma\right)\bar{o}_{1}+\beta\varepsilon(\alpha+\delta)\bar{o}_{2} & =0\\
\varepsilon(\delta(\delta-\alpha)+2\beta\gamma)\bar{o}_{2}+\gamma\varepsilon(\alpha+\delta)\bar{o}_{1} & =0
\end{align*}
with the following solutions:

\begin{enumerate}
\item $\gamma=-\frac{\alpha^{2}}{\beta}\text{ and }\delta=-\alpha$. Suppose
$\varepsilon\neq0$ then $J\simeq\S_{1}^{3}$: if $\alpha\neq0$ the
isomorphism is given by $\bar{e}_{1}\mapsto e_{1}$, $\bar{o}_{1}\mapsto-\frac{\sqrt{\beta}}{\alpha\sqrt{\varepsilon}}o_{2}$
and $\bar{o}_{2}\mapsto\frac{\alpha}{\sqrt{\beta\varepsilon}}o_{1}+\sqrt{\frac{\beta}{\varepsilon}}o_{2}$,
but if $\alpha=0$ then $\bar{e}_{1}\mapsto e_{1}$, $\bar{o}_{1}\mapsto-\frac{1}{\sqrt{\beta\varepsilon}}o_{1}$
and $\bar{o}_{2}\mapsto-\sqrt{\frac{\text{\ensuremath{\beta}}}{\varepsilon}}o_{2}$.
In case of $\varepsilon=0$, we obtain $J\simeq\S_{3}^{3}$ with change
of basis $\bar{e}_{1}\mapsto e_{1}$, $\bar{o}_{1}\mapsto-\frac{\beta}{\alpha^{2}}o_{2}$
and $\bar{o}_{2}\mapsto o_{1}+\frac{\beta}{\alpha}o_{2}$ for $\alpha\neq0$
or $\bar{e}_{1}\mapsto e_{1}$, $\bar{o}_{1}\mapsto o_{1}$ and $\bar{o}_{2}\mapsto\beta o_{2}$
if $\alpha=0$. 

\item $\alpha=\beta=\gamma=\delta=0$. If we suppose $\varepsilon\neq0$,
the superalgebra $J$ is $\S_{2}^{3}$ with isomorphism $\bar{e}_{1}\mapsto\varepsilon e_{1}$,
$\bar{o}_{1}\mapsto o_{1}$ and $\bar{o}_{2}\mapsto o_{2}$. Otherwise,
$J\simeq\U_{2}^{s}\oplus\S_{1}^{1}\oplus\S_{1}^{1}$.
\item $\alpha=\beta=\delta=0$. If $\gamma=0$ we are in the previous case,
so we can consider $\gamma\neq0$. Suppose further that $\varepsilon\neq0$
then we obtain the superalgebra $\S_{1}^{3}$ with isomorphism given
by $\bar{e}_{1}\mapsto e_{1}$, $\bar{o}_{1}\mapsto-\frac{i}{\sqrt{\gamma\varepsilon}}o_{2}$
and $\bar{o}_{2}\mapsto-i\sqrt{\frac{\gamma}{\varepsilon}}o_{1}$.
But if $\varepsilon=0$, then again we get $J\simeq\S_{3}^{3}$ with
change of basis $\bar{e}_{1}\mapsto e_{1}$, $\bar{o}_{1}\mapsto o_{2}$
and $\bar{o}_{2}\mapsto\gamma o_{1}$.
\end{enumerate}
\footnotesize%
\begin{longtable}{ccc}
\caption{\label{tab:(1,2)J3}Jordan superalgebras of type $(1,2)$ with $J_{\overline{0}}=\protect\U_{2}$.}
\tabularnewline
\endfirsthead
\toprule 
$J$ & Multiplication table & Observation\tabularnewline
\midrule
$\S_{1}^{3}$ & $e_{1}o_{1}=o_{2}\:o_{1}o_{2}=e_{1}$ & \tabularnewline
$\S_{2}^{3}$ & $o_{1}o_{2}=e_{1}$ & associative\tabularnewline
$\S_{3}^{3}$ & $e_{1}o_{1}=o_{2}$ & associative\tabularnewline
 & $\U_{2}^{s}\oplus\S_{1}^{1}\oplus\S_{1}^{1}$  & associative\tabularnewline
\midrule
\end{longtable}\normalsize
\item Suppose that $J_{\overline{0}}=\U_{1}$, then $J$ has an even homogeneous
idempotent element. Using the Peirce decomposition \eqref{eq:Peirce1}
we get $J=\P_{0}\oplus\P_{1}\oplus\P_{\frac{1}{2}}$ and the corresponding
decomposition of the odd part $J_{\overline{1}}=\P_{0}^{\overline{1}}\oplus\P_{1}^{\overline{1}}\oplus\P_{\frac{1}{2}}^{\overline{1}}$
where $\P_{i}^{\overline{1}}=J_{\overline{1}}\cap\P_{i}$, $i=0,\frac{1}{2},1$.
Then $J$ is completely defined by the product $o_{1}o_{2}=\varepsilon e_{1}$
and the Peirce subspace $\P_{i}^{\overline{1}}$ to which $o_{1}$
and $o_{2}$ belong. In this way we obtain eight non-isomorphic superalgebras
given in Table \ref{tab:(1,2)J2}.

\footnotesize%
\begin{longtable}{ccc}
\caption{\label{tab:(1,2)J2}Jordan superalgebras of type $(1,2)$ with $J_{\overline{0}}=\protect\U_{1}$.}
\tabularnewline
\endfirsthead
\toprule 
$J$ & Multiplication table & Observation\tabularnewline
\midrule
 & $\S_{1}^{2}\oplus\S_{1}^{1}$ & $o_{1}\in\P_{0},o_{2}\in\P_{\frac{1}{2}}$ \tabularnewline
$\S_{4}^{3}$ & $e_{1}^{2}=e_{1}\:e_{1}o_{1}=o_{1}\:e_{1}o_{2}=\frac{1}{2}o_{2}$ & $o_{1}\in\P_{1},o_{2}\in\P_{\frac{1}{2}}$ \tabularnewline
 & $\S_{2}^{2}\oplus\S_{1}^{1}$ & $o_{1}\in\P_{0},o_{2}\in\P_{1}$, associative\tabularnewline
 & $\U_{1}^{s}\oplus\S_{1}^{1}\oplus\S_{1}^{1}$ & $o_{1},o_{2}\in\P_{0}$ associative\tabularnewline
$\S_{5}^{3}$ & $e_{1}^{2}=e_{1}\:e_{1}o_{1}=\frac{1}{2}o_{1}\:e_{1}o_{2}=\frac{1}{2}o_{2}$ & $o_{1},o_{2}\in\P_{\frac{1}{2}}\:\varepsilon=0$\tabularnewline
$\S_{6}^{3}$ & $e_{1}^{2}=e_{1}\:e_{1}o_{1}=o_{1}\:e_{1}o_{2}=o_{2}$ & $o_{1},o_{2}\in\P_{1}\:\varepsilon=0$ associative\tabularnewline
$\S_{7}^{3}$ & $e_{1}^{2}=e_{1}\:e_{1}o_{1}=\frac{1}{2}o_{1}\:e_{1}o_{2}=\frac{1}{2}o_{2}\:o_{1}o_{2}=e_{1}$ & $o_{1},o_{2}\in\P_{\frac{1}{2}}\:\varepsilon\neq0$ \tabularnewline
$\S_{8}^{3}$ & $e_{1}^{2}=e_{1}\:e_{1}o_{1}=o_{1}\:e_{1}o_{2}=o_{2}\:o_{1}o_{2}=e_{1}$ & $o_{1},o_{2}\in\P_{1}\:\varepsilon\neq0$ \tabularnewline
\midrule
\end{longtable}\normalsize

Up to permutation of $o_{1}$ and $o_{2}$, we have six possibilities:
\begin{enumerate}
\item $o_{1},o_{2}\in\P_{\frac{1}{2}}$. If $\varepsilon=0$ we get the
superalgebra $\S_{5}^{3}$, but if $\varepsilon\neq0$ we can choose
$\varepsilon=1$ thus $J\simeq\S_{7}^{3}$. 
\item $o_{1},o_{2}\in\P_{1}$. Analogously, if $\varepsilon=0$ we obtain
$\S_{6}^{3}$, otherwise we can choose $\varepsilon=1$ and we get
the superalgebra $\S_{8}^{3}$.
\item $o_{1},o_{2}\in\P_{0}$, thus it follows from the multiplicative properties
of Peirce components \eqref{eq:Peirce1-1} that $o_{1}o_{2}\in\P_{0}^{2}\subseteq\P_{0}=J_{\overline{1}}$
but $o_{1}o_{2}\in J_{\overline{0}}$ then $o_{1}o_{2}=0$ and we
obtain the superalgebra $\U_{1}^{s}\oplus\S_{1}^{1}\oplus\S_{1}^{1}$.
\item $o_{1}\in\P_{0},o_{2}\in\P_{1}$, then $o_{1}o_{2}\in\P_{0}\P_{1}=0$
and $J\simeq\S_{2}^{2}\oplus\S_{1}^{1}$.
\item $o_{1}\in\P_{0},o_{2}\in\P_{\frac{1}{2}}$, it follows that $o_{1}o_{2}\in\P_{0}\P_{\frac{1}{2}}\subseteq\P_{\frac{1}{2}}=\ka o_{2}$
 thus $o_{1}o_{2}=0$ and we get $J\simeq\S_{1}^{2}\oplus\S_{1}^{1}$.
\item $o_{1}\in\P_{1},o_{2}\in\P_{\frac{1}{2}}$, analogously $o_{1}o_{2}\in\P_{1}\P_{\frac{1}{2}}\subseteq\P_{\frac{1}{2}}=\ka o_{2}$
 then $o_{1}o_{2}=0$ and we obtain the superalgebra $\S_{4}^{3}$.
\end{enumerate}
\end{enumerate}

\subsubsection{Jordan superalgebras of type $(2,1)$}

Let $J=J_{\overline{0}}+J_{\overline{1}}$ be a three-dimensional
Jordan superalgebra of type $(2,1)$ and let $\{e_{1},e_{2},o_{1}\}$
be a basis of $J$. In this case, the even part $J_{\overline{0}}$
is one of the six two-dimensional Jordan algebras given in Table \ref{tab:IndecompJA3}
and $J_{\overline{1}}=\S_{1}^{1}$. All that remains is to determine
the action of $J_{\overline{0}}$ in $o_{1}$, namely, to found scalars
$\alpha,\beta\in\ka$ such as $e_{1}o_{1}=\alpha o_{1}$ and $e_{2}o_{1}=\beta o_{1}$.
\begin{enumerate}
\item Suppose $J_{\overline{0}}=\B_{1}$, so we have one idempotent $e_{1}$
relative to which $J_{\overline{1}}=\P_{0}^{\overline{1}}\oplus\P_{1}^{\overline{1}}\oplus\P_{\frac{1}{2}}^{\overline{1}}$
with $e_{1},e_{2}\in\P_{1}$. We claim that $J$ is one of the superalgebras
in Table \ref{tab:(2,1)B1}. 

\footnotesize%
\begin{longtable}{ccc}
\caption{\label{tab:(2,1)B1}Jordan superalgebras of type $(2,1)$ with $J_{\overline{0}}=\protect\B_{1}$.}
\tabularnewline
\endfirsthead
\toprule 
$J$ & Multiplication table & Observation\tabularnewline
\midrule
 & $\B_{1}^{s}\oplus\S_{1}^{1}$ & $o_{1}\in\P_{0}$, associative\tabularnewline
$\S_{9}^{3}$ & $e_{1}^{2}=e_{1}\:e_{1}e_{2}=e_{2}\:e_{1}o_{1}=\frac{1}{2}o_{1}$ & $o_{1}\in\P_{\frac{1}{2}}$ \tabularnewline
$\S_{10}^{3}$ & $e_{1}^{2}=e_{1}\:e_{1}e_{2}=e_{2}\:e_{1}o_{1}=o_{1}$ & $o_{1}\in\P_{1}$, associative\tabularnewline
\midrule
\end{longtable}\normalsize

Indeed, according to the Peirce subspace $\P_{i}$ to which $o_{1}$
belongs we have:
\begin{enumerate}
\item $o_{1}\in\P_{0}$ then $e_{2}o_{1}\in\P_{1}\P_{0}=0$ and we obtain
the superalgebra $\B_{1}^{s}\oplus\S_{1}^{1}$.
\item $o_{1}\in\P_{\frac{1}{2}}$, writing the Jordan superidentity \eqref{eq:superJordanid}
for the basis we get the equations $-\beta^{2}o_{1}=0$ and $2\beta^{3}o_{1}=0$,
then necessarily $\beta=0$ and $J\simeq\S_{9}^{3}$. 
\item $o_{1}\in\P_{1}$, proceeding in the same way the equation $2\beta^{3}o_{1}=0$
gives $\beta=0$, thus $J\simeq\S_{10}^{3}$.
\end{enumerate}
\item If $J_{\overline{0}}=\B_{2}$, analogously we have one idempotent
$e_{1}$ relative to which $J_{\overline{1}}=\P_{0}^{\overline{1}}\oplus\P_{1}^{\overline{1}}\oplus\P_{\frac{1}{2}}^{\overline{1}}$
with $e_{1}\in\P_{1}$, $e_{2}\in\P_{\frac{1}{2}}$. Whatever the
Peirce subspace that $o_{1}$ belongs to, it follows from the multiplicative
properties \eqref{eq:Peirce1-1} and \eqref{eq:Peirce1-2} and the
fact that $e_{2}o_{1}\in J_{\overline{1}}$ that the product $e_{2}o_{1}$
is always zero. So, we obtain three non-isomorphic Jordan superalgebras
given in Table \ref{tab:(2,1)B2}.

\footnotesize%
\begin{longtable}{ccc}
\caption{\label{tab:(2,1)B2}Jordan superalgebras of type $(2,1)$ with $J_{\overline{0}}=\protect\B_{2}$.}
\tabularnewline
\endfirsthead
\toprule 
$J$ & Multiplication table & Observation\tabularnewline
\midrule
 & $\B_{2}^{s}\oplus\S_{1}^{1}$ & $o_{1}\in\P_{0}$\tabularnewline
$\S_{11}^{3}$ & $e_{1}^{2}=e_{1}\:e_{1}e_{2}=\frac{1}{2}e_{2}\:e_{1}o_{1}=\frac{1}{2}o_{1}$ & $o_{1}\in\P_{\frac{1}{2}}$\tabularnewline
$\S_{12}^{3}$ & $e_{1}^{2}=e_{1}\:e_{1}e_{2}=\frac{1}{2}e_{2}\:e_{1}o_{1}=o_{1}$ & $o_{1}\in\P_{1}$\tabularnewline
\midrule
\end{longtable}\normalsize

\item Consider $J_{\overline{0}}=\B_{3}$, since $\B_{3}$ is a nilpotent
algebra $J$ does not have any idempotent element, therefore Peirce
decomposition is not available. Then, computing the superidentity
in the basis elements we get the system
\begin{flalign*}
\alpha\left(2\alpha^{2}-3\beta\right)o_{1} & =0\quad\beta\left(2\alpha^{2}-\beta\right)o_{1}=0\\
2\alpha\beta^{2}o_{1} & =0\quad2\beta^{3}o_{1}=0
\end{flalign*}
whose solution is $\alpha=\beta=0$ (still when $\car\ka=3$). Consequently,
we obtain the associative (then Jordan) superalgebra $\B_{3}^{s}\oplus\S_{1}^{1}$.

\item Now, if $J_{\overline{0}}=\U_{1}\oplus\U_{1}$ then $J$ has two even
homogeneous orthogonal idempotent elements which implies in the following
decomposition of $J_{\overline{1}}$ in Peirce subspaces
\[
J_{\overline{1}}=\P_{00}^{\overline{1}}\oplus\P_{01}^{\overline{1}}\oplus\P_{02}^{\overline{1}}\oplus\P_{11}^{\overline{1}}\oplus\P_{12}^{\overline{1}}\oplus\P_{22}^{\overline{1}}\text{.}
\]
Thus $J$ is completely defined by the Peirce component $\P_{ij}^{\overline{1}}$
to which $o_{1}$ belongs. This provides us the four non-isomorphic
superalgebras: $\U_{1}^{s}\oplus\U_{1}^{s}\oplus\S_{1}^{1}$ if $o_{1}\in\P_{00}$,
$\S_{1}^{2}\oplus\U_{1}^{s}$ if $o_{1}\in\P_{01}$ (or, analogously,
$o_{1}\in\P_{02}$), $\S_{2}^{2}\oplus\U_{1}^{s}$ if $o_{1}\in\P_{11}$
(or, analogously, $o_{1}\in\P_{22}$) and the indecomposable one $\S_{13}^{3}$
given by $e_{1}^{2}=e_{1}$, $e_{2}^{2}=e_{2}$, $e_{1}o_{1}=\frac{1}{2}o_{1}$,
$e_{2}o_{1}=\frac{1}{2}o_{1}$ in the case of $o_{1}\in\P_{12}$.

\item Assume $J_{\overline{0}}=\U_{1}\oplus\U_{2}$, then $J$ has an idempotent
$e_{1}$ such as $J_{\overline{1}}=\P_{0}^{\overline{1}}\oplus\P_{1}^{\overline{1}}\oplus\P_{\frac{1}{2}}^{\overline{1}}$
with $e_{1}\in\P_{1}$ and $e_{2}\in\P_{0}$. If $o_{1}\in\P_{0}$
or $o_{1}\in\P_{\frac{1}{2}}$ necessarily $\beta$ has to be zero
for the respective superalgebras to satisfy the Jordan superidentity:
$2\beta^{3}o_{1}=0$ in the first case and $\beta^{2}o_{1}=0$, $2\beta^{3}o_{1}=0$
in the second one. We obtain the superalgebras $\U_{1}^{s}\oplus\U_{2}^{s}\oplus\S_{1}^{1}$
 and $\S_{1}^{2}\oplus\U_{2}^{s}$, respectively. Lastly, if $o_{1}\in\P_{1}$
then $e_{2}o_{1}\in\P_{0}\P_{1}=0$ and the resulting superalgebra
is $\S_{2}^{2}\oplus\U_{2}^{s}$. 

\item Finally, if $J_{\overline{0}}$ is the null algebra $\U_{2}\oplus\U_{2}$
then again Peirce decomposition is not available. Computing the Jordan
superidentity in the basis we have:
\begin{flalign*}
2\alpha^{3}o_{1} & =0\quad2\alpha^{2}\beta o_{1}=0\\
2\alpha\beta^{2}o_{1} & =0\quad2\beta^{3}o_{1}=0
\end{flalign*}
which has as solution $\alpha=\beta=0$. Hence, we obtain the associative
superalgebra $\U_{2}^{s}\oplus\U_{2}^{s}\oplus\S_{1}^{1}$.

\end{enumerate}

Until now, we have proved that any three-dimensional Jordan superalgebra
$J$ over an algebraically closed field of characteristic not $2$
is isomorphic to one of the superalgebras presented in this subsection.
The following theorem completes the algebraic classification.
\begin{thm}
\label{thm:superalg-nao-isomorfas}All superalgebras in Section \ref{subsec:classification3}
are pairwise non-isomorphic.
\end{thm}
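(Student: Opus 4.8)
The plan is to exploit two structural invariants that any isomorphism of superalgebras must preserve. First, an isomorphism $\phi\colon J\to J'$ of superalgebras respects the grading, so it restricts to an algebra isomorphism $\phi|_{J_{\overline{0}}}\colon J_{\overline{0}}\to J'_{\overline{0}}$ of the even parts and to a linear isomorphism $J_{\overline{1}}\to J'_{\overline{1}}$ of the odd parts. Second, it preserves the dimensions of the homogeneous components, hence the type $(n,m)$. Since $J$ is three-dimensional, the only types are $(3,0)$, $(2,1)$, $(1,2)$ and $(0,3)$, and superalgebras of different types cannot be isomorphic. So I would first partition the list of superalgebras from Section \ref{subsec:classification3} by type and argue that the comparison only needs to be made \emph{within} each type.

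Next, within a fixed type I would use the even part as the primary invariant. Because $\phi|_{J_{\overline{0}}}$ is an algebra isomorphism, two superalgebras with non-isomorphic even parts are automatically non-isomorphic; and the even parts are precisely the ordinary Jordan algebras of Table \ref{tab:IndecompJA3}, which are already known to be pairwise non-isomorphic from \cite{martin}. This immediately separates, for instance, the type $(2,1)$ families built on $\B_{1}$, $\B_{2}$, $\B_{3}$, $\U_{1}\oplus\U_{1}$, $\U_{1}\oplus\U_{2}$ and $\U_{2}\oplus\U_{2}$ from one another. What remains is to distinguish superalgebras sharing the \emph{same} even part, which is exactly the situation flagged in the examples (e.g.\ the four graduations of $\T_{6}$): here I would appeal to the Peirce decomposition. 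An isomorphism carries the idempotents of $J_{\overline{0}}$ to idempotents of $J'_{\overline{0}}$, hence maps the Peirce subspace $\P_{i}^{\overline{1}}$ to $\P_{i}^{\overline{1}}(J')$, so the data ``which Peirce component $o_{1}$ lies in'' and ``whether $\varepsilon=0$ or $\varepsilon\neq 0$'' are isomorphism invariants. This is precisely how the tables were constructed, so the separation is built into the classification and need only be made explicit.

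The remaining delicate cases are those where neither the even part nor a single Peirce grading obviously distinguishes two candidates — for example type $(1,2)$ over $\U_{2}$, where no idempotent exists and the structure is encoded purely in the action matrix $\left(\begin{smallmatrix}\alpha&\gamma\\ \beta&\delta\end{smallmatrix}\right)$ together with $\varepsilon$. For these I would fall back on genuinely isomorphism-invariant quantities: whether $J_{\overline{1}}^{2}=0$ or not (which separates $\S_{1}^{3},\S_{2}^{3}$ from $\S_{3}^{3}$ and the split case), the dimension of the annihilator $\Ann(J)$, the dimension of $J^{2}$, and whether the superalgebra is associative (an invariant recorded in the ``Observation'' column). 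Computing these invariants for each superalgebra in the list and checking that the resulting tuples are pairwise distinct within each type finishes the proof.

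I expect the main obstacle to be the type $(1,2)$ case with $J_{\overline{0}}=\U_{2}$, since there the absence of idempotents removes the Peirce machinery and one must verify directly that the normal forms $\S_{1}^{3}$, $\S_{2}^{3}$ and $\S_{3}^{3}$ are genuinely inequivalent rather than merely different parametrizations of the same orbit; the invariants $\dim J^{2}$, $\dim\Ann(J)$ and associativity should suffice, but these must be tabulated carefully. The rest of the argument is a routine, if lengthy, bookkeeping exercise: within each type, list the invariant tuple for every superalgebra and observe that no two coincide.
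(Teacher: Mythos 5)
Your proposal is correct and uses essentially the same toolbox as the paper: preservation of the type $(n,m)$, the isomorphism class of the even part, the Peirce data of $o_{1},o_{2}$ relative to the (essentially unique) even idempotent, associativity, and $J_{\overline{1}}^{2}=0$ versus $J_{\overline{1}}^{2}\neq0$. The only cosmetic differences are organizational: the paper first isolates the superalgebras with $J_{\overline{1}}^{2}=0$ and invokes the ungraded classification of \cite{martin} to reduce to graduations of a single algebra (leaving only $\S_{9}^{3}$ versus $\S_{12}^{3}$ to settle via even parts), and it separates $\S_{7}^{3}$ from $\S_{8}^{3}$ by the existence of a unity rather than by Peirce eigenvalues.
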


\begin{proof}
Firstly, consider the superalgebras such as $J_{\overline{1}}^{2}=0$,
namely those algebras in Table \ref{tab:JAvsJSA-3}. Following an
analogous reasoning to that done in dimension two and taking into
account that decomposable superalgebras are guaranteed to be non-isomorphics,
we only have to verify whether $\S_{12}^{3}\overset{?}{\simeq}\S_{9}^{3}$.
The result follows from $(\S_{12}^{3})_{\overline{0}}=\B_{2}\not\simeq\B_{1}=(\S_{9}^{3})_{\overline{0}}$.

It remains to analyze superalgebras with $J_{\overline{1}}^{2}\neq0$,
namely $\S_{1}^{3}$, $\S_{2}^{3}$, $\S_{7}^{3}$ and $\S_{8}^{3}$.
They all have type $(1,2)$, the first two have even part $\U_{2}$
while $J_{\overline{0}}=\U_{1}$ in the remaining two superalgebras.
$\S_{2}^{3}$ is an associative superalgebra while $\S_{1}^{3}$ is
not. On the other hand, the Jordan superalgebra $\S_{8}^{3}$ has
a unity $e_{1}$ whereas $\S_{7}^{3}$ has not, indeed $\S_{7}^{3}$
is $K_{3}$ the Kaplansky superalgebra, one of the few examples of
finite dimensional simple nonunital Jordan superalgebras.

\footnotesize%
\begin{longtable}{c>{\centering}p{2cm}>{\centering}p{2cm}>{\centering}p{2cm}>{\centering}p{2cm}}
\caption{\label{tab:JAvsJSA-3}Three-dimensional Jordan superalgebras with
$J_{\overline{1}}^{2}=0$}
\tabularnewline
\toprule 
\multirow{2}{*}{Algebra} & \multicolumn{4}{c}{Superalgebras}\tabularnewline
\cmidrule{2-5} 
\endfirsthead
 & $(0,3)$ & $(1,2)$ & $(2,1)$ & $(3,0)$\tabularnewline
\midrule
$\mathcal{T}_{1}$  &  &  &  & $\T_{1}^{s}$\tabularnewline
\midrule
$\mathcal{T}_{2}$  &  & $\S_{6}^{3}$ & $\S_{10}^{3}$ & $\T_{2}^{s}$\tabularnewline
\midrule
$\mathcal{T}_{3}$  &  &  &  & $\T_{3}^{s}$\tabularnewline
\midrule
$\mathcal{T}_{4}$  &  & $\S_{3}^{3}$ &  & $\T_{4}^{s}$\tabularnewline
\midrule
$\mathcal{T}_{5}$  &  &  &  & $\T_{5}^{s}$\tabularnewline
\midrule
$\mathcal{T}_{6}$  &  & $\S_{4}^{3}$ & $\S_{12}^{3}$

$\S_{9}^{3}$ & $\T_{6}^{s}$\tabularnewline
\midrule
$\mathcal{T}_{7}$  &  & $\S_{5}^{3}$ & $\S_{11}^{3}$ & $\T_{7}^{s}$\tabularnewline
\midrule
$\mathcal{T}_{8}$  &  &  &  & $\T_{8}^{s}$\tabularnewline
\midrule
$\mathcal{T}_{9}$  &  &  &  & $\T_{9}^{s}$\tabularnewline
\midrule
$\mathcal{T}_{10}$  &  &  & $\S_{13}^{3}$ & $\T_{10}^{s}$\tabularnewline
\midrule
$\B_{1}\oplus\U_{1}$ &  &  & $\S_{2}^{2}\oplus\U_{1}^{s}$ & $\B_{1}^{s}\oplus\U_{1}^{s}$\tabularnewline
\midrule
$\B_{1}\oplus\U_{2}$ &  & $\S_{2}^{2}\oplus\S_{1}^{1}$ & $\S_{2}^{2}\oplus\U_{2}^{s}$

$\B_{1}^{s}\oplus\S_{1}^{1}$ & $\B_{1}^{s}\oplus\U_{2}^{s}$\tabularnewline
\midrule
$\B_{2}\oplus\U_{1}$ &  &  & $\S_{1}^{2}\oplus\U_{1}^{s}$ & $\B_{2}^{s}\oplus\U_{1}^{s}$\tabularnewline
\midrule 
 $\B_{2}\oplus\U_{2}$ &  & $\S_{1}^{2}\oplus\S_{1}^{1}$ & $\S_{1}^{2}\oplus\U_{2}^{s}$

$\B_{2}^{s}\oplus\S_{1}^{1}$ & $\B_{2}^{s}\oplus\U_{2}^{s}$\tabularnewline
\midrule
 $\B_{3}\oplus\U_{1}$ &  &  &  & $\B_{3}^{s}\oplus\U_{1}^{s}$\tabularnewline
\midrule
$\B_{3}\oplus\U_{2}$ &  &  & $\B_{3}^{s}\oplus\S_{1}^{1}$ & $\B_{3}^{s}\oplus\U_{2}^{s}$\tabularnewline
\midrule
 $\U_{1}\oplus\U_{1}\oplus\U_{1}$ &  &  &  & $\U_{1}^{s}\oplus\U_{1}^{s}\oplus\U_{1}^{s}$\tabularnewline
\midrule 
$\U_{1}\oplus\U_{1}\oplus\U_{2}$ &  &  & $\U_{1}^{s}\oplus\U_{1}^{s}\oplus\S_{1}^{1}$ & $\U_{1}^{s}\oplus\U_{1}^{s}\oplus\U_{2}^{s}$\tabularnewline
\midrule
$\U_{1}\oplus\U_{2}\oplus\U_{2}$ &  & $\U_{1}^{s}\oplus\S_{1}^{1}\oplus\S_{1}^{1}$ & $\U_{1}^{s}\oplus\U_{2}^{s}\oplus\S_{1}^{1}$ & $\U_{1}^{s}\oplus\U_{2}^{s}\oplus\U_{2}^{s}$\tabularnewline
\midrule
 $\U_{2}\oplus\U_{2}\oplus\U_{2}$ & $\S_{1}^{1}\oplus\S_{1}^{1}\oplus\S_{1}^{1}$ & $\U_{2}^{s}\oplus\S_{1}^{1}\oplus\S_{1}^{1}$ & $\U_{2}^{s}\oplus\U_{2}^{s}\oplus\S_{1}^{1}$ & $\U_{2}^{s}\oplus\U_{2}^{s}\oplus\U_{2}^{s}$\tabularnewline
\midrule
\end{longtable}\normalsize
\end{proof}
\begin{thm}
All Jordan superalgebras of dimension up to $3$ are special. 
\end{thm}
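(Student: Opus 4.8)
The plan is to read off speciality from the explicit classification instead of arguing abstractly. Since a Jordan superalgebra $J=J_{1}\oplus J_{2}$ embeds in $A_{1}^{(+)}\oplus A_{2}^{(+)}=(A_{1}\oplus A_{2})^{(+)}$ whenever both summands are special, speciality is stable under direct sums, so it suffices to examine the indecomposable superalgebras of Section \ref{subsec:classification3}. First I would dispatch every one of them that happens to be \emph{associative}: all the entries marked so in the tables, the null piece $\S_{1}^{1}$, and the trivially graded superalgebras whose underlying ordinary algebra is associative. For such a $J$, supercommutativity \eqref{eq:superComutativity} yields $(-1)^{|a||b|}ba=ab$, so the symmetrized product $a\odot b=\tfrac12(ab+(-1)^{|a||b|}ba)$ coincides with $ab$ and the identity map presents $J$ as $J^{(+)}$; hence every associative Jordan superalgebra is special.

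This leaves the non-associative indecomposables, which I would split according to whether $J_{\overline{1}}^{2}=0$. The superalgebras with $J_{\overline{1}}^{2}=0$ (for instance the trivially graded $\T_{6}^{s},\T_{7}^{s},\T_{10}^{s}$ and the non-trivially graded $\S_{1}^{2},\S_{4}^{3},\S_{5}^{3},\S_{9}^{3},\S_{11}^{3},\S_{12}^{3},\S_{13}^{3}$) are ordinary Jordan algebras equipped with a $\mathbb{Z}_{2}$-grading. Each underlying algebra is special — of dimension at most three, with the envelope $U(\T_{6})$ computed above as a representative instance; see \cite{martin} — so the canonical map $\iota\colon J\to U(J)^{(+)}$ into the ordinary universal envelope is injective. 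By the correspondence recalled in Section \ref{sec:Preliminaries} the grading comes from an automorphism $\varphi\in\Aut(J)$ with $\varphi^{2}=\Id$; the universal property lifts $\varphi$ to an automorphism $\widehat{\varphi}$ of $U(J)$, and uniqueness forces $\widehat{\varphi}^{2}=\Id$. Its $\pm1$-eigenspaces make $U(J)$ an associative superalgebra for which $\iota$ preserves parity, so $J$ is special as a superalgebra.

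Finally there remain the three non-associative indecomposables with $J_{\overline{1}}^{2}\neq0$, namely $\S_{7}^{3}$, $\S_{8}^{3}$ and $\S_{1}^{3}$. The first is the Kaplansky superalgebra $K_{3}$, special by Example \ref{exa:K3-is-special}, and the second is the superalgebra of the non-degenerate superform on a two-dimensional odd space, special by Example \ref{exa:superform-is-special}. The genuinely new case, which I expect to be the main obstacle, is $\S_{1}^{3}$ (where $e_{1}^{2}=0$, $e_{1}o_{1}=o_{2}$ and $o_{1}o_{2}=e_{1}$): here neither the superform nor the Kaplansky model applies, and an associative envelope has to be built by hand. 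My plan is to compute the universal enveloping superalgebra $U(\S_{1}^{3})$, exactly as was done for $\S_{12}^{3}$ above, and check that $\S_{1}^{3}\to U(\S_{1}^{3})^{(+)}$ is injective; concretely, to exhibit an even element $E$ and odd elements $O_{1},O_{2}$ in a small matrix superalgebra over a Grassmann or Weyl algebra satisfying $E^{2}=0$, $\tfrac12(EO_{1}+O_{1}E)=O_{2}$, $\tfrac12(EO_{2}+O_{2}E)=0$ and $\tfrac12(O_{1}O_{2}-O_{2}O_{1})=E$. Verifying that these associative relations are consistent and the resulting map injective is the one place where a genuine computation, rather than a structural reduction, is required.
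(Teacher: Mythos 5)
Your architecture is exactly the paper's: reduce to indecomposables, declare the associative ones special via the identity map, treat the $J_{\overline{1}}^{2}=0$ superalgebras as graded ordinary Jordan algebras whose (special) ungraded envelopes inherit the grading, and settle $\S_{7}^{3}$ and $\S_{8}^{3}$ by Examples \ref{exa:K3-is-special} and \ref{exa:superform-is-special}. The one genuine gap is the case you yourself flag as the main obstacle: for $\S_{1}^{3}$ you write down the four relations an envelope must satisfy but neither exhibit elements satisfying them nor prove that the universal envelope does not collapse $\S_{1}^{3}$; a stated plan to ``compute $U(\S_{1}^{3})$ and check injectivity'' is not a proof, and this is the only point of the theorem that is not bookkeeping. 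The paper closes it by quoting Example 1 of \cite{ShestakovCounterexamples}: take $A=M_{3}^{1,2}(\ka)$ with the block grading you describe and set $e_{1}\mapsto e_{23}$, $o_{1}\mapsto e_{13}-4e_{31}$, $o_{2}\mapsto e_{1}\odot o_{1}=-2e_{21}$ (the paper writes $o_{1}\mapsto e_{13}+4e_{31}$, which yields $o_{1}\odot o_{2}=-e_{1}$; a sign flip repairs this). One then verifies $e_{23}^{2}=0$, $e_{1}\odot o_{2}=0$ and $o_{1}\odot o_{2}=e_{1}$, so your relations are consistent already in a $3\times3$ matrix superalgebra over $\ka$, with no Weyl or Grassmann coefficients needed. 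With that embedding supplied, your argument is complete.

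A secondary caveat, which your write-up shares with the paper's but which your more detailed automorphism-lifting argument makes visible: for two odd elements the product in $A^{(+)}$ is the commutator $\tfrac12(ab-ba)$, whereas the ungraded envelope of the underlying Jordan algebra only guarantees $ab+ba=2\iota(a\cdot b)$. So for a superalgebra with $J_{\overline{1}}^{2}=0$, making $\iota$ grading-preserving does not by itself give $\iota(o_{1})\iota(o_{2})=\iota(o_{2})\iota(o_{1})$, which is what speciality of the \emph{super}algebra requires on $J_{\overline{1}}\times J_{\overline{1}}$. This is vacuous when $\dim J_{\overline{1}}\leq1$ and is easily checked by hand for the type $(1,2)$ entries with $J_{\overline{1}}^{2}=0$ (e.g.\ for $\S_{5}^{3}$ take $e_{1}\mapsto e_{11}$, $o_{1}\mapsto e_{12}$, $o_{2}\mapsto e_{13}$ in $M_{3}^{1,2}(\ka)$, where both products of the odd images vanish), but it deserves an explicit sentence rather than being absorbed into the eigenspace argument.
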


\begin{proof}
It follows from \cite{TeseRuso} that ordinary Jordan algebras up
to dimension $3$ are special, then it is clear that Jordan superalgebras
up to dimension $3$ such as $J_{\overline{1}}^{2}=0$ are also special
since the graduation on the associative enveloped algebra is induced
by the graduation of $J$. 

Associative Jordan superalgebras are trivially special, it is the
case of $\S_{2}^{3}$. Furthermore, we have already proved in Example
\ref{exa:K3-is-special} that $K_{3}\simeq\S_{7}^{3}$ is also special. 

Now, observe that $\S_{8}^{3}$ is the simple Jordan superalgebra
of the non-degenerate supersymmetric bilinear form $f:V\times V\to\ka$
where $V=V_{\overline{1}}$ is a vector superspace over $\ka$ such
as $\dim V_{\overline{1}}=2$, given by $f(o_{i},o_{i})=0$ for $i=1,2$
and $f(o_{1},o_{2})=1=-f(o_{2},o_{1})$. Therefore, the speciality
follows from Example \ref{exa:superform-is-special}. 

Finally, superalgebra $\S_{1}^{3}$ is exactly the algebra supercommutative,
solvable of index $2$ and nonnilpotent described in Example $1$
of \cite{ShestakovCounterexamples} and proved to be special by exhibiting
an embedding in the superalgebra $A^{(+)}$ for the matrix superalgebra
$A=M_{3}^{1,2}(\ka)$, where 
\[
A_{\overline{0}}=\left\{ \left(\begin{array}{cc}
a & 0\\
0 & b
\end{array}\right),a\in\ka,\ b\in M_{2}(\ka)\right\} 
\]
 and 
\[
A_{\overline{1}}=\left\{ \left(\begin{array}{cc}
0 & x\\
y & 0
\end{array}\right),\,x\in M_{1,2}(\ka),\ y\in M_{2,1}(\ka)\right\} 
\]
 putting $e_{1}=e_{23}\in A_{\overline{0}}$, $o_{1}=e_{13}+4e_{31}\in A_{\overline{1}}$,
$o_{2}=e_{1}\odot o_{1}$. 
\end{proof}
As a consequence, we obtain a bound for the minimal dimension of exceptional
Jordan superalgebras as posed in \cite{ExceptionalShestakov}. We observe
that, up to our knowledge, the dimension $7$ for the Example \ref{exa:exceptional7}
is the least dimension for the known examples of exceptional Jordan
superalgebras. 
\begin{cor}
The minimal dimension $d$ of an exceptional Jordan superalgebra satisfies
$4\leq d\leq7$.
\end{cor}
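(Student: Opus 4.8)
The plan is to establish the two inequalities separately, as each follows immediately from results already assembled in the paper. For the lower bound $4\leq d$, I would invoke the preceding theorem, which asserts that every Jordan superalgebra of dimension at most $3$ is special. Since an exceptional Jordan superalgebra is by definition one that is \emph{not} special, no exceptional Jordan superalgebra can have dimension $1$, $2$, or $3$. Hence the minimal dimension of an exceptional Jordan superalgebra is at least $4$.

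For the upper bound $d\leq7$, I would simply exhibit a single exceptional Jordan superalgebra of dimension $7$. This is precisely the superalgebra of Example \ref{exa:exceptional7}, constructed in \cite{ExceptionalShestakov}, whose even part is $\T_{7}$ and whose odd part is four-dimensional, spanned by $o_{1},o_{2},o_{3},o_{4}$. As it is known to be exceptional, the minimal dimension $d$ satisfies $d\leq7$. Combining the two bounds gives $4\leq d\leq7$.

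The proof requires no computation: both halves are direct consequences of the two facts the statement cites, namely the speciality classification just proved (for the lower bound) and the existence of the known $7$-dimensional example (for the upper bound). There is thus no genuine obstacle in proving the corollary as stated. The real difficulty—left open here—is to \emph{close} the gap, that is, to decide whether $d$ equals $4$, $5$, $6$, or $7$; resolving this would require either exhibiting an exceptional example in dimension $4$, $5$, or $6$, or else extending the speciality analysis beyond dimension $3$, neither of which is carried out in the present work.
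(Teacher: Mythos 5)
Your proof is correct and follows exactly the route the paper intends: the lower bound comes from the preceding speciality theorem for dimensions $\leq 3$, and the upper bound from the $7$-dimensional exceptional superalgebra of Example \ref{exa:exceptional7}. The paper itself gives no further argument for this corollary, so there is nothing to add.
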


\section*{\protect\pagebreak{}References}

\bibliographystyle{mystyle}
\bibliography{library}

\end{document}